\theoremstyle{plain}
\newtheorem{thm}{Theorem}[section]
\newtheorem{prop}[thm]{Proposition}
\newtheorem{lemma}[thm]{Lemma}
\theoremstyle{definition}
\newtheorem{exa}[thm]{Example}
\newtheorem{rem}[thm]{Remark}
\newtheorem{defi}[thm]{Definition}
\newtheorem*{nota*}{Notation}
\newtheorem*{thm*}{Theorem}
\newtheorem{case}{Case}
\newcommand{\MF}{\mathrm{matching field}}
\newcommand{\RR}{\mathbb{R}}
\newcommand{\ZZ}{\mathbb{Z}}
\newcommand{\BL}{\mathcal{B}_\ell}
\newcommand{\conv}{{\rm Conv}}
\newcommand{\MP}{{P}}
\def\wb{{\mathbf w}}
\newcommand{\Trop}[3][black]{\draw[#1,thick,domain=0:5-#2] plot(\x+#2,\x+#3);
 \draw[#1,thick,domain=-2-#2:0] plot(\x+#2,#3);
 \draw[#1,thick,domain=-2-#3:0] plot(#2,\x+#3);}
\newcommand{\wTrop}[3][black]{\draw[#1, thick,domain=0:7-#2] plot(\x+#2,\x+#3);
 \draw[#1, thick,domain=-2-#2:0] plot(\x+#2,#3);
 \draw[#1, thick,domain=-20-#3:0] plot(#2,\x+#3);}
\let\int\relax
\DeclareMathOperator{\int}{int}
\begin{document}
\title{Tropical hyperplane arrangements and combinatorial mutations of the matching field polytopes of Grassmannians}
\author{Nobukazu Kowaki}
\address[N. Kowaki]{Department of pure and applied mathematics, graduate school of information science and technology, Osaka University, Suita, Osaka 565-0871, Japan}
\email{u793177f@ecs.osaka-u.ac.jp}
\thanks{\noindent{\bf Keywords:} toric degenerations, Grassmannians, combinatorial mutation, tropical hyperplane arrangement}
\subjclass{Primary:~14M15, Secondary;~13F65, 13P10}
\maketitle
\begin{abstract}
A sequence of combinatorial mutations of matching field polytopes preserves the property of giving rise to a toric degeneration of  Grassmannians. In this paper, we find a way to check that two matching field polytopes are combinatorial mutation equivalence using tropical hyperplane arrangements, ``literally at a glance". Our way can prove that block diagonal matching fields are combinatorial mutations equivalent to diagonal matching fields. This is one of main results in \cite{clarke2021combinatorial}. Our result can be regarded as a generalization of that result.
\end{abstract}
\section{Introduction}
A \textit{Grassmannian} $\mathrm{Gr}(k,n)$ is the set of all subspaces of a given dimension $k$ in $\mathbb{K}^n$, where $\mathbb{K}$ is a field. 

Toric degenerations provide the tool of polyhedral geometry to study algebraic geometry.
One of the motivations for toric degenerations of Grassmannians comes from mirror symmetry \cite{coates2022unwinding}. Therefore, we study toric degenerations of Grassmannians. In particular we focus on toric degeneration by using SAGBI basis.

SAGBI basis was independently introduced by Kapur and Madlener \cite{kapur1989completion} and Robbiano and Sweedler \cite{robbiano1990subalgebra}. SAGBI means Subalgebra Analogue of Gr\"{o}bner Bases for Ideals.  SAGBI basis answers the subalgebra membership problem similar to Gr\"{o}bner bases to the ideal membership problem. What is important for us is that toric degenerations of Grassmannians by matching fields are characterized by that Pl\"{u}cker coordinates form SAGBI basis.

We need the monomial ordering to use SAGBI basis. To give this, we use matching fields. Given integers $k$ and $n$, a \textit{matching field} denoted by $\Lambda(k,n)$, or $\Lambda$ when there is no confusion, is a choice of permutation $\Lambda(I) \in S_k$ for each $I \in \mathbf{I}_{k, n} = \{ I \subset [n] : |I| = k \}$, where $[n]=\{1,2,\ldots,n\}$ 
. 
A matching field has toric degenerations if Pl\"{u}cker coordinates form SAGBI basis for the subalgebra they generate, with respect to a monomial ordering the matching field induces. See Section \ref{sec:matching_fields} for more details.
 Matching fields were born to study the Newton polytope of the product of all maximal minors of a matrix of indeterminates $X=(x_{ij})$ by Sturmfels and Zelevinsky \cite{sturmfels1993maximal}. Matching field polytope is defined as the convex hull of the exponent vectors corresponding to the initial terms of maximal minors of a matrix of indeterminates $X=(x_{ij})$ by matching fields by Mohammadi and Shaw \cite{mohammadi2019toric}. The most famous example of matching fields is the diagonal matching field. This chooses the diagonal term as the initial for each minor. 
Namely, the diagonal matching field sends every $I \in I_{k,n}$ to $\mathrm{id} \in S_k$.

Mohammadi and Shaw introduced a more generalized class of matching fields, block diagonal matching fields \cite{mohammadi2019toric}. The image of this matching field is $\{ \mathrm{id},\ (1\ 2) \}$. Block diagonal matching fields were studied by many researchers \cite{clarke2021combinatorial,clarke2021toric,clarke2024toric,higashitani2022quadratic}. Note we still do not know the complete answer to which block diagonal matching field has toric degenerations. 

Checking if a coherent matching field gives toric degeneration is a difficult problem. However, in $\mathrm{Gr}(2,n)$, solving this problem is easy because all the coherent matching fields are the same as the diagonal matching field up to symmetry. 

How about $\mathrm{Gr}(3,n)$? In the case of $\mathrm{Gr}(3,n)$, it is proved in \cite[Theorem 1.3]{mohammadi2019toric} that a non-hexagonal $3\times n$ matching field whose ideal is quadratically generated provides a toric degeneration of $\mathrm{Gr}(3,n)$. At present, no non-hexagonal $3\times n$ matching fields whose ideal is not quadratically generated are known.

To study which matching field has toric degenerations for $\mathrm{Gr}(3,n)$, we use the combinatorial mutation. Combinatorial mutation was introduced by Akhtar, Coates, Galkin and Kasprzyk \cite{MR3007265}. Combinatorial mutation was applied by Clarke, Higashitani and Mohammadi in the context of Grassmannians \cite{clarke2021combinatorial}. 
Combinatorial mutation about matching field polytopes preserves whether the matching field gives toric degeneration \cite[Theorem 1]{clarke2021combinatorial}. 

However, checking combinatorial mutation equivalence is difficult because it is quite sensitive. So, we establish a new tool to solve the problems by using tropical hyperplane arrangements. This way can solve the problem, ``literally at a glance".

Our main theorem is as follows.
\setcounter{section}{3}
\setcounter{thm}{1}

\begin{thm}
Assume that there are two matching fields $\Lambda ,\ \Lambda '$, both of which have the same tropical hyperplane arrangements except for swapping two adjacent tropical lines, $i$ and $j$, and that condition $(\boldsymbol{\ast})$ is satisfied. Then, the pair of matching field polytopes $P_\Lambda ,\ P_{\Lambda '}$ can be obtained from one another by a combinatorial mutation.
\end{thm}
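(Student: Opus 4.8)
The plan is to convert the combinatorial datum ``swap the two adjacent tropical lines $i$ and $j$'' into an explicit displacement of the vertices of $P_\Lambda$, and then to recognise that displacement as a single combinatorial mutation. Recall that the vertices of $P_\Lambda$ are the exponent vectors $v^\Lambda_I \in \RR^{k\times n}$ of the initial monomials, where for $I=\{a_1<\dots<a_k\}$ one has $v^\Lambda_I=\sum_{r=1}^k e_{r,\,a_{\Lambda(I)(r)}}$. Since $\Lambda$ and $\Lambda'$ carry identical arrangements away from the transposition of the lines $i$ and $j$, the two matching fields assign the same permutation to every $I$ that does not ``see'' the swap, so $v^{\Lambda'}_I=v^\Lambda_I$ for all such $I$; the two polytopes can therefore differ only through the vertices coming from the affected subsets.

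First I would pin down the set $\mathcal S$ of affected subsets together with the displacement. Reading off the arrangement, $\mathcal S$ consists exactly of those $I$ for which the relative position of the lines $i$ and $j$ is reversed by the swap, and for each such $I$ the permutation changes by the transposition exchanging the rows $p,q$ matched to the columns $i,j$, so that $v^{\Lambda'}_I-v^\Lambda_I = e_{p,j}+e_{q,i}-e_{p,i}-e_{q,j}$. A priori the row pair $\{p,q\}$ could vary with $I$, which would block a single mutation; the role of condition $(\boldsymbol{\ast})$ is precisely to force $\{p,q\}$ to be the \emph{same} for every $I\in\mathcal S$, so that this difference is one lattice vector $u$ independent of $I$. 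Establishing this reduction is the first thing I would prove, and I expect it to be the exact content that $(\boldsymbol{\ast})$ encodes.

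The displacement $u$ then dictates the mutation data. Writing $u = (e_{p,j}-e_{p,i})-(e_{q,j}-e_{q,i})$, I would take the factor to be a lattice segment $F=\conv\{0,f\}$ built from one of these differences and choose a primitive $w\in(\ZZ^{k\times n})^{*}$ vanishing on $f$ and grading $P_\Lambda$ so that the vertices indexed by $\mathcal S$ are the ones lying off the central slice $\{w=0\}$. The desired equality $\mathrm{mut}_w(P_\Lambda,F)=P_{\Lambda'}$ is then checked slice by slice along $w$: the slice at height $0$ is common to both polytopes, the positive slices of $P_{\Lambda'}$ are obtained from those of $P_\Lambda$ by Minkowski-adding the appropriate multiple of $F$, and the negative slices of $P_\Lambda$ Minkowski-factor through $|h|F$. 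Comparing the resulting vertex sets with $\{v^{\Lambda'}_I\}$ would then close the argument.

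The hard part will be the last verification, namely checking that $F$ is genuinely a \emph{factor} of $P_\Lambda$: at every negative height $h$ the corresponding slice must Minkowski-decompose as a lattice polytope plus $|h|F$. This is not a statement about vertices alone but about the full face structure of $P_\Lambda$ transverse to $w$, and it is exactly here that the hypotheses do the work — the uniformity supplied by $(\boldsymbol{\ast})$ should guarantee that the affected vertices line up into edges parallel to $f$, so that the segment $F$ can be stripped off while remaining inside the lattice. A secondary technical point is to confirm that $w$ separates the affected from the unaffected vertices cleanly, so that the mutation does not inadvertently disturb a vertex indexed by some $I\notin\mathcal S$; once both points are settled, matching the mutated vertex set against $\{v^{\Lambda'}_I\}$ finishes the proof and, crucially, shows that a \emph{single} mutation suffices.
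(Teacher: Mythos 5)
There is a genuine gap, and in fact the mutation data you propose cannot work. The affected vertices are exactly the $v_{\{i,j,k\},\Lambda}$ with $k$ in the red region, and each must be displaced by the full four-term vector $e_{1,i}+e_{2,j}-e_{1,j}-e_{2,i}$ (the rows involved are always $1$ and $2$, since the $y$-rays of $i$ and $j$ are adjacent). Under a single combinatorial mutation every moved vertex is displaced along one fixed direction, so that four-term vector must itself be the shear direction; it is not parallel to either single-row difference $e_{p,j}-e_{p,i}$, so a factor segment ``built from one of these differences'' sends the affected vertices to points that are not vertices of $P_{\Lambda'}$. In the paper the four-term vector is taken as $w_{(i,j)}$, and the factor is $F_{(i,j)}=\conv\{0,f\}$ for a \emph{different} $f\in w_{(i,j)}^{\perp}$ whose support is not confined to columns $i,j$: it also has entries $(1,-1,0)^T$ in every column lying in the green or yellow regions. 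This choice of $f$ is the crux, because $\langle f,\cdot\rangle$ is the grading, and it is designed so that exactly the affected vertices sit at height $-1$ while a controlled list of other vertices sits at height $+1$. Relatedly, your reading of condition $(\boldsymbol{\ast})$ is off: the uniformity of the row pair $\{p,q\}$ is automatic from adjacency and is not what $(\boldsymbol{\ast})$ encodes. Condition (b) (blue and olive empty) is what limits the height-$+1$ vertices to three tractable patterns, and (c), (d) guarantee that height-$+1$ vertices exist at all, i.e.\ that the map is a genuine mutation rather than a unimodular shear.

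The second gap is that the one step carrying all the mathematical content --- your ``hard part,'' namely that the image is convex, equivalently that the negative slices Minkowski-factor through $F$ --- is only asserted to ``should'' follow from $(\boldsymbol{\ast})$. The paper proves it via Lemma \ref{lem:no_edge_in_image}: for every vertex $u$ at height $-1$ and every vertex $v$ at height $+1$ one must exhibit $t,t'\in P_\Lambda$ at height $0$ with $t+t'=u+v$. This is done by classifying the height-$+1$ vertices into the three patterns permitted by condition (b) and, in each case, producing $t,t'$ by swapping a single row entry between the two tableaux. Without that case analysis (and with the incorrect $f$, the classification of which vertices lie at nonzero height would come out wrong --- in particular it is false that only the affected vertices lie off the central slice), the argument does not close.
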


\setcounter{section}{1}
As a corollary of this theorem, we obtain there is a sequence of combinatorial mutation to the diagonal matching field for any block diagonal matching field. See Remark \ref{BDMFCor}.

We finish the introduction with an outline of the paper. 
Section $2.1$ fixes notations for the Grassmannians. In Section $2.2$, we introduce matching fields and the associated polytope. In Section $2.3$, we introduce the combinatorial mutation. Finally, our main theorem is proved in Section $3$ and an example of our main theorem is also given there. 
\section{Preliminaries}
\subsection{Grassmannians and Pl\"{u}cker coordinates}
A \textit{Grassmannian} $\mathrm{Gr}(k,n)$ is the set of all subspaces of a given dimension $k$ in $\mathbb{K}^n$, where $\mathbb{K}$ is a field.  For any subset $I = \{ i_1, \ldots ,i_k\} \subset [n]=\{1,2,\ldots,n\}$, and the $k\times n$ indeterminants matrix $X$, let $X_{I}$ be the submatrix with rows $1,\ldots ,k$ and columns $i_1,\ldots ,i_k$. Let $\mathbf{I}_{k, n} = \{ I \subset [n] : |I| = k \}$. The kernel of the map $$\varphi \colon\  \mathbb{K}[P_I:I\in \mathbf{I}_{k,n}]  \rightarrow \mathbb{K}[x_{ij}:1\leq i \leq k, 1 \leq j \leq n] \quad\text{with}\quad P_{I}   \mapsto \mathrm{det} (X_{I})$$
is called the \textit{Pl\"{u}cker ideal}. The \textit{Pl\"{u}cker coordinates} are the minors $\det (X_I)$, and the corresponding \textit{Pl\"{u}cker variable} is denoted by $P_I$. We identify Pl\"{u}cker coordinates with Pl\"{u}cker variables.
\subsection{Matching fields and their associated ideals}\label{sec:matching_fields}

We use many definitions from \cite{clarke2021combinatorial,mohammadi2019toric}.
Given integers $k$ and $n$, a \textit{matching field}, denoted by $\Lambda(k,n)$, or $\Lambda$ when there is no confusion, is a choice of permutation $\Lambda(I) \in S_k$ for each $I \in \mathbf{I}_{k, n} $
. We think of the permutation {$\sigma=\Lambda(I)$} as inducing a new ordering on the elements of $I$, where the position of $i_s$ is  $\sigma(s)$. In addition, we think of $I$ as being identified with a monomial of the Pl\"{u}cker variable $P_I$ and we represent these monomials as a $k \times 1$ tableau where the entry of $(\sigma(s), 1)$ is $i_{s}$. To make this tableau notation precise we define the ideal of the matching field as follows.

\medskip
Let $X=(x_{ij})$ be a $k \times n$ matrix of indeterminates. To every $I=\{i_1, \cdots, i_k\} \in \mathbf{I}_{k,n} \text{ with } i_1 <\cdots <i_k \text{ and } \sigma = \Lambda(I)$, let 
$
\textbf{x}_{\Lambda(I)}:=x_{1 i_{\sigma(1)}}x_{2 i_{\sigma(2)}}\cdots x_{{k i_{\sigma(k)}}}. 
$
The \textit{matching field ideal} $J_\Lambda$ is defined as the kernel of the monomial map
\begin{eqnarray}\label{eqn:monomialmap}
\varphi_{\Lambda} \colon\  & \mathbb{K}[P_I]  \rightarrow \mathbb{K}[x_{ij}]  
\quad\text{with}\quad
 P_{I}   \mapsto \text{sgn}(\Lambda(I)) \textbf{x}_{\Lambda(I)},
\end{eqnarray}
where $\text{sgn}(\Lambda(I))$ denotes the signature of the permutation $\Lambda(I)$ for each $I \in \mathbf{I}_{k, n}$. 
\medskip
\begin{defi}\label{def:matching_field}
A matching field $\Lambda$  is \textit{coherent} if there exists a $k\times n$ matrix $M=(m_{ij})$ 
with 
$m_{ij}\in\mathbb{R}$ 
such that 
for every  $I \in \mathbf{I}_{k,n}$ 
the initial of the Pl\"ucker coordinate  $\mathrm{det} (X_I) \in \mathbb{K}[x_{ij}]$ is $\text{in}_M (\mathrm{det}(X_I)) = \varphi_{\Lambda}(P_I)$, where $\text{in}_M (\mathrm{det}(X_I))$ is the sum of all terms in $\mathrm{det}(X_I)$ of the lowest weight and the weight of a monomial $x_{1i_1}\cdots x_{ki_k}$ is $m_{1i_1}+\cdots+m_{ki_k}$.
In this case, we say that the matrix $M$ \textit{induces the matching field} $\Lambda$. We let $\wb_M$ be the weight vector on the variables $P_I$ induced by the entries $m_{ij}$ of the weight matrix $M$ on the variables $x_{ij}$. More precisely, the weight of each variable $P_I$ is defined as the minimum weight of the terms of the corresponding minor of $M$, and it is called $\mathit{\text{\textit{the weight induced by }}M}$. 
\end{defi}
\medskip
\begin{exa}\label{ex:diag}
Consider the matching field $\Lambda(3,6)$ which assigns to each subset $I$ the identity permutation. Consider the following matrix:
\[
M=\begin{pmatrix}
     0  &0  & 0  & 0  & 0  & 0  \\
     6  &5  & 4 & 3  & 2  & 1\\
     11  &9 & 7 & 5 & 3  & 1 \\
\end{pmatrix}.
\]
The weights induced by $M$ on the variables $P_{123}, P_{124},\dotsc, P_{456}$ are $12, 10,\dotsc, 3$ respectively. Thus, for each $I=\{i,j,k\}$ we have that $\text{in}_M (\mathrm{det}(X_I)) = x_{1i}x_{2j}x_{3k}$ for $1\leq i<j<k\leq 6$. Therefore, the matrix $M$ induces $\Lambda(3,6)$. 

\medskip
Notice that each initial term $\text{in}_M(P_I)$ arises from the leading diagonal. Such matching fields are called \textit{diagonal}.
\end{exa}
As a generalization of the diagonal matching field, we define a \textit{block diagonal matching field}.
\begin{defi}[{\cite[Definition 4.1]{mohammadi2019toric}}]\label{def:block}
Given $k,n$ and $0\leq\ell\leq n$, we define the 
block diagonal matching field
$\BL$
as the map from $\mathbf{I}_{k,n}$ to $S_k$ such that
\[
 \BL(I)= \left\{
     \begin{array}{@{}l@{\thinspace}l}
      \mathrm{id} \; \; \; \; \; & \text{if $\lvert I|=1$ or $\lvert I \cap [\ell]\rvert \neq 1$},\\
      (12)  & \text{otherwise}. \\
     \end{array}
   \right.
\]
\end{defi}
\medskip

Given a matching field $\Lambda$, we associate to it a polytope $\MP_{\Lambda}$. The vertices of the polytope are in one-to-one correspondence with the tableaux of the matching field. In fact, reading the vertices of the polytope uniquely defines the matching field.
\begin{defi}[{\cite[Definition 6]{clarke2021combinatorial}}]\label{def:matching_field_polytope}
Fix $k$ and $n$. We take $\RR^{k \times n}$ to be the vector space of $k \times n$ matrices with canonical basis $\{ e_{i,j} : 1 \le i \le k, 1 \le j \le n\}$ where $e_{i,j}$ is the matrix with a $1$ in row $i$ and column $j$ and zeros everywhere else. Given a matching field $\Lambda$, for each $I=\{i_1,\ldots,i_k\} \in\mathbf{I}_{k,n}$ with $1 \le i_1< \cdots < i_k \le n$ we set $v_{I,\Lambda}:=\sum_{j=1}^ke_{j,i_{\Lambda(I)(j)}}$.
Then the \textit{matching field polytope} is
\[
\MP_{\Lambda} = \conv\left\{
v_{I,\Lambda}:\ I\in \mathbf{I}_{k,n}
\right\}.
\]
For notation we often write the tuple $(i_{\Lambda(I)(1)}, i_{\Lambda(I)(2)}, \dots, i_{\Lambda(I)(k)})$ for the vector $v_{I, \Lambda}$.
\end{defi}
\medskip
\subsection{Tropical maps and combinatorial mutations}
Let  $M=(m_{ij}) \in \mathbb{R}^{k \times n}$ be a weight matrix. For each $1 \leq j \leq n$ consider the  piecewise linear function $F_j: \mathbb{R}^{k-1} \to \mathbb{R}$, given by 
 \begin{eqnarray*}\label{eqn:linearform}
 F_j ((x_2,\cdots,x_k)) = \max \{ m_{1j}, m_{2j} + x_2, \dots, m_{kj}+ x_k\}. 
 \end{eqnarray*}

\medskip
A $k \times n$  weight matrix $M \in \mathbb{R}^{k \times n}$,  whose first row consists of zeros,  produces an arrangement of tropical hyperplanes $\mathcal{A} = \{ H_1, \dots , H_n\}$ defined by the functions  $F_1, \dots , F_n$, whose coefficients come from $M$. 
To each $k-1$ dimensional cell $\tau$ of the  complement of the arrangement $\mathcal{A}$ in  $\mathbb{R}^{k-1}$ there is an associated \emph{covector} $c_{\tau} \in \mathcal{P}[n]^k$, where $\mathcal{P}[n]$ denotes the power set of $n$. 
The $i$-th entry of the covector $c_{\tau}$ is a subset $S_i \subset [n]$ corresponding to the collection of  hyperplanes in $\mathcal{A}$ which intersect the ray  $x + tv_i$ for $x \in \tau^{\circ}$ and $t \geq 0$, where $\tau^{\circ}$ is the interior of $\tau$, and we let the vectors $v_i = -e_{i-1}$ for $i = 2, \dots, k-1$, and $v_1 = (1, \dots, 1)$. The \emph{coarse covector}
of a cell is simply the vector which records the sizes of the subsets of the covector. 
For example, the tropical hyperplane arrangement of the matrix inducing the diagonal matching field is as in Figure \ref{diagonal}.
\medskip

\begin{figure}[h]
\centering
\begin{tikzpicture}
\Trop{0}{0}
\Trop{1}{2}
\Trop{0.5}{1}
\Trop{1.5}{3}
\Trop{-0.5}{-1}
\Trop{2}{4}
\end{tikzpicture}
\caption{Tropicalization of the diagonal $\MF$ of Gr(3,6)}\label{diagonal}
\end{figure}
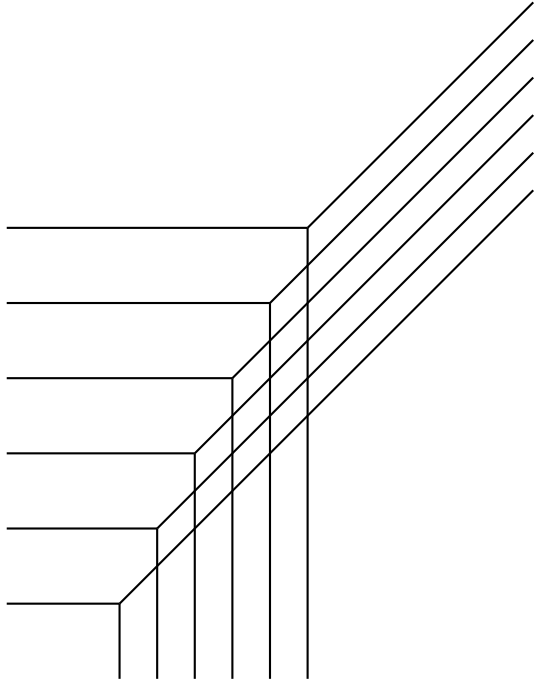
The following proposition can be extracted from \cite[Proposition 3.4]{mohammadi2019toric}. This proposition is a bridge between tropical geometry and initial degeneration.
\medskip
\begin{prop}[{\cite[Proposition 3.4]{mohammadi2019toric}}]\label{prop:initialPlucker}
Let $M$ be a $k \times n$ weight matrix, and let $\Lambda$ be the coherent matching field induced by $M$. Assume that for any $I = \{i_1, \ldots ,i_k\} \in \mathbf{I}_{k,n}$ with $i_1<\cdots <i_k$,   the collection of hyperplanes $\{H_i\}_{i\in I}$ intersects properly. 
Then
$$\mathrm{in}_{M} (\mathrm{det} (X_{I})) =\mathrm{sgn}({\Lambda(I)}) x_{1c_1}x_{2c_2}\dots x_{kc_{k}},$$
where $(\{c_1\}, \dots, \{c_{k}\})$ is the covector of the unique cell with coarse covector $(1, 1, \dots, 1)$ 
such that $i_{\Lambda(I)(j)}=c_j$ for $j=1,\ldots ,k$.
\end{prop}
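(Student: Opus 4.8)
The plan is to read the initial term off the expansion of the determinant and then match the optimizing permutation with the covector of the fully mixed cell. Writing $I=\{i_1,\dots,i_k\}$ with $i_1<\cdots<i_k$, expand
$$\det(X_I)=\sum_{\sigma\in S_k}\mathrm{sgn}(\sigma)\,x_{1\,i_{\sigma(1)}}\cdots x_{k\,i_{\sigma(k)}},$$
so that the $M$-weight of the term indexed by $\sigma$ is $w(\sigma)=\sum_{s=1}^k m_{s\,i_{\sigma(s)}}$, the weight of $\sigma$ in the bipartite assignment problem on rows $[k]$ and columns $I$. By definition $\mathrm{in}_M(\det(X_I))$ is the sum of the terms of lowest weight, i.e. those indexed by the minimizers of $w$. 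First I would show that the hypothesis that $\{H_i\}_{i\in I}$ intersect properly is exactly the statement that the $k\times k$ submatrix $(m_{s\,c})_{s\in[k],\,c\in I}$ is tropically nonsingular, i.e. that $w$ attains its extreme value at a unique permutation $\sigma^{*}$. Granting this, the initial form is the single monomial $\mathrm{sgn}(\sigma^{*})\,x_{1\,i_{\sigma^{*}(1)}}\cdots x_{k\,i_{\sigma^{*}(k)}}$, and since $\Lambda$ is by definition the matching field induced by $M$, we have $\sigma^{*}=\Lambda(I)$ and $\mathrm{in}_M(\det(X_I))=\mathrm{sgn}(\Lambda(I))\,\mathbf{x}_{\Lambda(I)}$.

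It then remains to identify the columns $c_1,\dots,c_k$ of the covector with $i_{\Lambda(I)(1)},\dots,i_{\Lambda(I)(k)}$. Working inside the subarrangement $\{H_i\}_{i\in I}$, I would unwind the ray-crossing definition of the covector to obtain, for $x$ in the interior of a cell (with the convention $x_1=0$), the pointwise description $S_i=\{\,c\in I:\ \text{the $i$-th of the terms } m_{1c},m_{2c}+x_2,\dots,m_{kc}+x_k \text{ realizes } F_c(x)\,\}$; this follows by a short case analysis from the ray directions $v_1=(1,\dots,1)$ and $v_i=-e_{i-1}$, as the ray from $x$ along $v_i$ crosses $H_c$ exactly when that $i$-th term is the one attaining $F_c(x)$. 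Hence a cell has coarse covector $(1,\dots,1)$ precisely when each row is extremal for a single column and these columns are distinct, i.e. when the assignment $i\mapsto c_i$ is a bijection $[k]\to I$. Treating the cell coordinates $x_2,\dots,x_k$ as dual variables and the values $F_c(x)$ as column potentials, complementary slackness for the assignment LP shows that this bijection is precisely the optimal assignment $\sigma^{*}=\Lambda(I)$, so that $c_j=i_{\Lambda(I)(j)}$; uniqueness of the optimal assignment coming from proper intersection then yields uniqueness of the fully mixed cell.

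The main obstacle I expect is the bookkeeping in the second paragraph: pinning down the exact sign and extremality conventions so that the optimizing permutation of $w$ is the one recorded by the covector with the stated ray directions, and then deducing uniqueness of the fully mixed cell cleanly from proper intersection rather than merely its nonemptiness. The conceptual heart is the complementary-slackness duality between an interior point of the fully mixed cell and the optimal assignment; once that dictionary is fixed, both the equality $c_j=i_{\Lambda(I)(j)}$ and the signed monomial drop out, and everything else is the routine expansion of the determinant.
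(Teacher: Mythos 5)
The paper itself offers no proof of this proposition --- it is imported from \cite[Proposition 3.4]{mohammadi2019toric} with the remark that it ``can be extracted from'' that result --- so there is no in-paper argument to compare against. Your strategy is the standard one and, in outline, the right one: expand $\det(X_I)$ so that the initial term is the optimizer of the assignment problem on the $k\times k$ submatrix $(m_{s c})_{s\in[k],\,c\in I}$, interpret proper intersection as tropical nonsingularity (uniqueness of the optimizer), and then match the fully mixed cell of the subarrangement $\{H_c\}_{c\in I}$ with that optimizer by LP duality. Your pointwise description of the covector is correct for the stated ray directions: with $F_c(x)=\max\{m_{1c},m_{2c}+x_2,\dots,m_{kc}+x_k\}$, the ray $x+tv_i$ meets $H_c$ exactly when the $i$-th term attains $F_c(x)$, and since generically each column is extremal in exactly one row, $\sum_i|S_i|=k$ for the subarrangement, so coarse covector $(1,\dots,1)$ is indeed equivalent to the assignment being a bijection $[k]\to I$.

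The genuine gap is precisely the point you flag and then leave open: the sign convention, and it is not mere bookkeeping --- under the paper's literal definitions your complementary-slackness step identifies the fully mixed cell with the \emph{wrong} permutation. If $\pi(c)$ denotes the extremal row for column $c$ at an interior point $x$ of the fully mixed cell (with $x_1:=0$), then $\sum_c m_{\pi(c),c}=\sum_{c}F_c(x)-\sum_s x_s\ \geq\ \sum_c m_{\pi'(c),c}$ for every bijection $\pi'$, so the cell encodes the \emph{maximum}-weight matching, whereas $\mathrm{in}_M$ is defined by the \emph{lowest} weight. One can test this on Example 2.2: for $I=\{1,2,3\}$ the apices of $H_1,H_2,H_3$ are $(-6,-11),(-5,-9),(-4,-7)$, the unique fully mixed cell contains $x=(-4.5,-9)$ and has covector $(\{3\},\{2\},\{1\})$, encoding $x_{13}x_{22}x_{31}$ of weight $16$ (the maximum), while $\mathrm{in}_M(\det(X_{123}))=x_{11}x_{22}x_{33}$ has weight $12$. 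So the identification $c_j=i_{\Lambda(I)(j)}$ only holds after a sign flip (building the arrangement from $-M$, or using min-plus hyperplanes, or reversing the rays); this inconsistency is already latent in the paper's conventions, but a proof must resolve it rather than note it as an expected obstacle, since as written the argument proves the statement with the order-reversing permutation. Two smaller unclosed steps: the equivalence ``proper intersection $\Leftrightarrow$ unique optimal permutation'' is asserted, not argued; and you establish uniqueness of the fully mixed cell but not its existence, which needs the optimal dual solution of the assignment LP together with genericity to place it in an open cell (convexity of the locus with a fixed covector then gives that it is a single cell).
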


\begin{figure}[h]
\centering
\begin{tikzpicture}[scale=0.6]
\Trop{0}{0}
\Trop{2}{4}
\Trop{1}{2}
\Trop[dashed]{1.5}{1.8}
\coordinate (A) at (0,-2);
 \draw (A) node[below]{$k$};
\coordinate (B) at (1,-2);
 \draw (B) node[below]{$i$};
\coordinate (C) at (2,-2);
 \draw (C) node[below]{$j$};
\end{tikzpicture}
\caption{This covector is $(\{j\},\{i\},\{k\})$. Therefore, we obtain the ordering $j,i,k$.}
\end{figure}
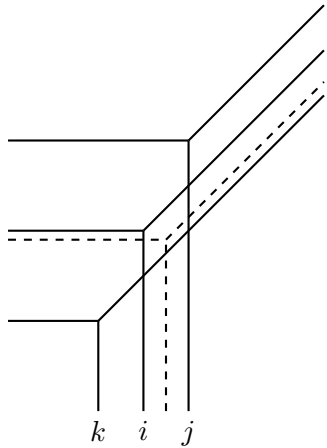
Tropical hyperplanes we deal with are the fan that has the three rays, i.e. $k=3$. Depending on the directions, we name every ray \textit{$x$-ray} $-e_1$, \textit{$y$-ray} $-e_2$, and \textit{diagonal ray} $e_1+e_2$. We call the intersection of those three rays the \textit{central point}.
The ray that has the inverse direction of a diagonal ray is the \textit{anti-diagonal ray}. Similarly, \textit{anti-$x$-ray} and \textit{anti-$y$-ray} are defined. See Figure \ref{nameofray}.

\medskip

\begin{figure}[h]
\centering
\begin{tikzpicture}
\Trop{1.5}{3}
\draw[dashed ,thick,domain=0:3.5] plot(-\x+1.5,-\x+3);
 \draw[dashed ,thick,domain=-3.5:0] plot(-\x+1.5,3);
 \draw[dashed ,thick,domain=-3.5:0] plot(1.5,-\x+3);
\filldraw (1.5, 3) circle (0.07);
\coordinate (A) at (2.6,3);
 \draw (A) node[below]{central point};
\coordinate (B) at (4.3,5);
 \draw (B) node[below]{diagonal ray};
\coordinate (C) at (2,-1);
 \draw (C) node[below]{$y$-ray};
\coordinate (D) at (-0.5,3);
 \draw (D) node[below]{$x$-ray};
\coordinate (E) at (-0.3,0);
 \draw (E) node[below]{anti-diagonal ray};
\coordinate (F) at (2.35,6);
 \draw (F) node[below]{anti-$y$-ray};
\coordinate (G) at (3.1,3.5);
 \draw (G) node[below]{anti-$x$-ray};
\end{tikzpicture}
\caption{The name of rays}\label{nameofray}
\end{figure}
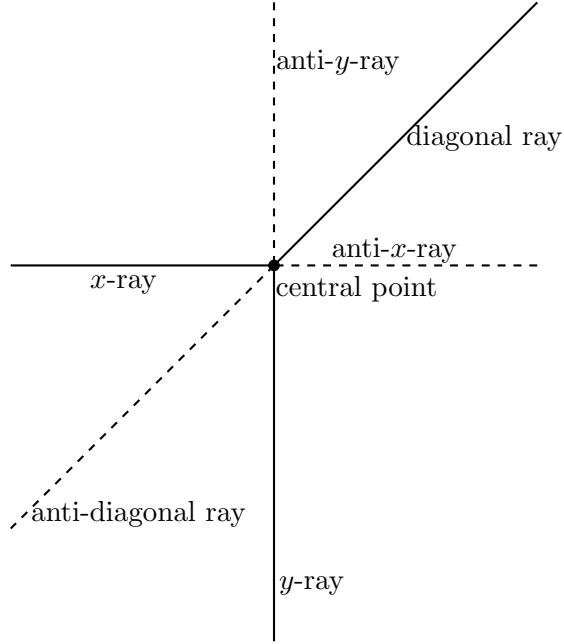

We begin by fixing two lattices $N = \ZZ^d$ and its dual $M = {\rm Hom}_\ZZ(N, \ZZ)$. We take $N_\RR = N \otimes_\ZZ \RR$ and similarly $M_\RR = M \otimes_\ZZ \RR$. We fix the standard inner product $\langle \cdot , \cdot \rangle : N_\RR \times M_\RR \rightarrow \RR$ given by evaluation $\langle v,u\rangle:=u(v)$ for $v\in N_\RR$ and $u\in M_\RR$. 
\medskip

\begin{defi} Let $w$ be a primitive lattice point of $M$ and $F \subset w^{\perp} \subset N_\RR$ a lattice polytope, where $w^{\perp}= \{v\in N_{\RR}| \langle v,w \rangle =0\}$.
The \textit{tropical map} defined by $w$ and $F$ is given by
\begin{equation*}
\varphi_{w,F}\ \colon\  M_\RR  \rightarrow M_\RR, \;\; u   \mapsto u-u_{\textrm{min}} w,
\end{equation*}
where $u_{\mathrm{min}}:=\mathrm{min}\{\langle u,p\rangle:p \in F\}$.\\
Let $\MP \subset M_\RR$ be a lattice polytope that contains the origin and suppose that $\varphi_{w,F}(\MP)$ is convex. Then we say that the polytope $\varphi_{w,F}(\MP)$ is a \textit{combinatorial mutation} of $\MP$. 
\end{defi}

\medskip
To check that the tropical map induces a combinatorial mutation, the following lemma is useful.
\medskip
\begin{lemma}[{\cite[Lemma 3]{clarke2021combinatorial}}]\label{lem:no_edge_in_image}
Let $\MP \subset M_{\RR}$ be a rational polytope and $f \in N$ a non-zero vector. Suppose that $\MP \subset \{x \in M_{\RR} : -1 \le \langle f,x \rangle \le 1 \}$. Write $\MP_+ = \{ x \in \MP : 0 \le \langle f,x \rangle \le 1 \}$ and $\MP_- = \{x \in \MP: -1 \le \langle f,x \rangle \le 0 \}$. Let $w \in M$ be a non-zero vector such that $f \in w^\perp$ and define $F = \conv\{0, f\}$. Let $\varphi = \varphi_{w,F}$ be the tropical map and note that $\varphi(\MP) = \varphi(\MP_+) \cup \varphi(\MP_-)$ is the union of two polytopes that intersect along a common face. 
Let $V(\varphi(\MP))$ be the vertices of $\varphi(\MP)$.
Then, there is no vertex of $\varphi(\MP)$ that is not the image of any vertex of $\MP$ if and only if for any vertices $u, v \in V(\MP)$ with $\langle f,u \rangle = -1 $, $\langle f,v \rangle = 1 $, there exist $t,t'\in P$ with $\langle f,t \rangle = \langle f,t'\rangle=0$ such that $t+t'=u+v$.
\end{lemma}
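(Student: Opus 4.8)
The plan is to make the piecewise-linear map $\varphi=\varphi_{w,F}$ completely explicit and then to localise the creation of ``new'' vertices to the hyperplane $f^{\perp}$. Since $F=\conv\{0,f\}$, for $u\in M_\RR$ we have $u_{\min}=\min\{0,\langle f,u\rangle\}$, so $\varphi$ is the identity on $\MP_{+}$ and is the linear shear $x\mapsto x-\langle f,x\rangle\,w$ on $\MP_{-}$. Because $\langle f,w\rangle=0$, the map $\varphi$ preserves the value of $\langle f,\cdot\rangle$, is invertible on each closed half-space (with inverse $x\mapsto x+\langle f,x\rangle w$ on the lower one), and fixes $f^{\perp}$ pointwise; hence it is a piecewise-linear homeomorphism of $M_\RR$, $\varphi(\MP)=\MP_{+}\cup\varphi(\MP_{-})$ is glued along $\MP_{0}:=\MP\cap f^{\perp}$, and $\varphi(\MP)\cap f^{\perp}=\MP_{0}$.

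First I would reduce to $f^{\perp}$. On each open half-space $\varphi$ agrees with an invertible linear map, so it carries the local face structure of $\MP$ faithfully, and every vertex of $\varphi(\MP)$ with $\langle f,\cdot\rangle\neq0$ is the image of a vertex of $\MP$. Thus a vertex of $\varphi(\MP)$ can fail to be such an image only if it lies in $\MP_0$, where, being extreme in $\varphi(\MP)\supseteq\MP_0$, it is a vertex of $\MP_0$ that is not a vertex of $\MP$ (a transversal crossing of a positive-dimensional face of $\MP$ with $f^{\perp}$, bent by the shear). The statement then reduces to: every such crossing vertex $c$ is non-extreme in $\varphi(\MP)$ iff $(\boldsymbol\ast)$ holds.

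The easy half is $(\boldsymbol\ast)\Rightarrow$ no new vertex. A candidate is detected on a face joining a vertex $u$ with $\langle f,u\rangle=-1$ to a vertex $v$ with $\langle f,v\rangle=1$; such a face meets $f^{\perp}$ in the midpoint $m=\tfrac12(u+v)$, fixed by $\varphi$. If $(\boldsymbol\ast)$ supplies $t,t'\in \MP_0$ with $t+t'=u+v$ and $t\neq t'$, then $m=\tfrac12(t+t')$ lies in the relative interior of the chord $[t,t']\subset \MP_0\subset\varphi(\MP)$, so $m$ is not extreme. The reverse half is the contrapositive: if no nondegenerate decomposition exists for some top/bottom pair, $m$ is a vertex of $\MP_0$ with no balanced chord, and the kink the shear forces at $m$ (note $w\notin\mathrm{span}(v-u)$, since $\langle f,w\rangle=0\neq\langle f,v-u\rangle=2$) survives as an honest corner.

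The main obstacle is the precise passage to the extreme heights $\langle f,\cdot\rangle=\pm1$ together with the exact equivalence. Non-extremality of $c$ is witnessed by a chord of $\varphi(\MP)$ crossing $f^{\perp}$ transversally; unwinding the shear, such a chord comes from a straddling pair $q_{-}\in \MP_{-}$, $p_{+}\in \MP_{+}$ whose segment meets $f^{\perp}$ at $c^{*}\in \MP_0$ with $c=c^{*}+sw$ for some $s>0$ (using $w\in f^{\perp}$). I would show, from convexity of $\MP$ and the width-$2$ normalisation $-1\le\langle f,\cdot\rangle\le1$, that any such transversal chord can be pushed out to endpoints of height $\pm1$ and thereby replaced by a width-$0$ chord realising $(\boldsymbol\ast)$ at a genuine top/bottom vertex pair, and conversely that the degenerate decomposition $t=t'=m$ — always available, hence to be read as excluded — is exactly what is left when $c$ does persist as a new vertex. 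Controlling this interplay between the $w$-direction inside $f^{\perp}$, the straddling chords, and the slab width is the crux; the bending computation and the reduction to $f^{\perp}$ are routine by comparison.
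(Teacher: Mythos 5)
The paper does not actually prove this lemma --- it is quoted from \cite{clarke2021combinatorial} --- so there is no in-paper argument to compare against; I can only judge your proposal on its own terms. Your reduction is sound as far as it goes: the explicit form of $\varphi$ (identity on $\MP_+$, the shear $x\mapsto x-\langle f,x\rangle w$ on $\MP_-$), the preservation of $\langle f,\cdot\rangle$, and the localisation of any non-image vertex to a vertex of $\MP_0=\MP\cap f^{\perp}$ that is not a vertex of $\MP$ are all correct. You also correctly spot the defect in the statement as transcribed: $t=t'=\tfrac12(u+v)$ always satisfies the displayed condition, so the criterion is vacuous unless one reads in $t\neq t'$ (or requires $t,t'$ to be vertices, as the application in Theorem 3.2 in fact supplies). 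With that reading your ``if'' direction is essentially complete: a nondegenerate chord $[t,t']\subset\MP_0$ through the midpoint destroys extremality in both $\MP_+$ and $\varphi(\MP_-)$.

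The genuine gap is the converse, which you yourself flag as ``the crux'' and leave open. You frame non-extremality of the candidate point $c$ via transversal chords of the union and the displacement $c=c^*+sw$; this is harder than necessary and not obviously completable, since the existence of such $w$-shifted chords depends on $w$ while the stated criterion does not mention $w$. (The intended reading of $V(\varphi(\MP))$, consistent with the preceding sentence of the lemma, is $V(\varphi(\MP_+))\cup V(\varphi(\MP_-))$, which removes $w$ from the problem entirely.) The missing idea is to work inside $\MP$ before applying $\varphi$: a vertex of $\MP_0$ that is not a vertex of $\MP$ is precisely the crossing point of an \emph{edge} of $\MP$ meeting $f^{\perp}$ transversally (the smallest face of $\MP$ containing the crossing point in its relative interior must be one-dimensional), and --- under the unstated but necessary hypothesis that all vertices of $\MP$ have $\langle f,\cdot\rangle\in\{-1,0,1\}$, automatic for lattice polytopes but not for general rational ones --- such an edge is $[u,v]$ with $\langle f,u\rangle=-1$, $\langle f,v\rangle=1$ and crossing point $\tfrac12(u+v)$. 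The clean equivalence closing the proof is: $[u,v]$ is an edge of $\MP$ iff the only $t,t'\in\MP_0$ with $t+t'=u+v$ is $t=t'=\tfrac12(u+v)$. One direction is the supporting-hyperplane argument for the edge; the other uses that $\tfrac12(u+v)$ lies in the relative interior of the smallest face $G$ containing $u$ and $v$, so if $[u,v]$ is not an edge then $\dim G\ge 2$ and $\aff(G)\cap f^{\perp}$ contains a short segment centred at the midpoint, giving a nondegenerate balanced pair $t\neq t'$. Your sketch never establishes that the ``kink'' at $c$ ``survives as an honest corner''; that is exactly the step requiring the supporting hyperplane of the edge $[u,v]$.
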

\medskip

\section{Main results}
Consider the arrangement of tropical hyperplanes associated with the coherent matching field of $\mathrm{Gr}(3,n)$. Let $M$ be a $3 \times n$ weight matrix and let $\{H_1,\cdots ,H_n\}$ be the tropical lines associated with $M$.
By abuse of notation, we identify $H_i$ with $i$. 
We say that two tropical lines $i$ and $j$ are \textit{adjacent} to each other if their $y$-rays are adjacent. Fix two adjacent tropical lines $i$ and $j$ and assume the $x$-coordinate of the central point of $j$ is larger than that of $i$. 

We define six colored regions. At first, assume the $y$-coordinate of the central point of $j$ is larger than that of $i$. 
\begin{itemize}
\item The red region is bounded by the anti-diagonal ray and $y$-ray of $i$.
\item The purple region is bounded by the anti-diagonal ray, anti-$y$-ray of $i$, and $x$-ray of $j$. 
\item The olive region is bounded by the anti-$y$ ray of $i$, and $x$-ray of $j$.
\item The blue region is the area bounded by the anti-$x$-ray and $y$-ray extending from the intersection of the diagonal ray of $i$ and the $y$-ray of $j$.
\item The green region is the area bounded by the anti-$x$-ray, anti-$y$-ray extending from the intersection of the diagonal ray of $i$ and the $y$-ray of $j$, and the diagonal ray of $j$.
\item The yellow region is bounded by the anti-$y$-ray and diagonal ray of $j$.
\end{itemize}
\begin{figure}[h]
\begin{tikzpicture}
 \fill[red!50](-2,-2)--(0,0)--(0,-2)--(-2,-2);
 \fill[blue!50](1,-2)--(1,1)--(3,1)--(3,-2);
 \fill[green!50](3,1)--(1,1)--(1,2)--(3,4);
 \fill[yellow!50](1,2)--(1,5)--(3,5)--(3,4);
 \fill[purple!50](-2,-2)--(0,0)--(0,2)--(-2,2);
 \fill[olive!50](-2,2)--(0,2)--(0,5)--(-2,5);
 \draw[thick,domain=0:3] plot(\x,\x);
 \draw[thick,domain=-2:0] plot(\x,0);
 \draw[thick,domain=-2:0] plot(0,\x);
 \draw[thick,dashed,domain=0:1.3] plot(\x,0);
 \draw[thick,dashed,domain=-2:0] plot(1.3,\x);
 \draw[thick,dashed,domain=0:1.7] plot(1.3+\x,\x);
 \draw[thick,domain=0:2] plot(\x+1,\x+2);
 \draw[thick,domain=-3:0] plot(\x+1,2);
 \draw[thick,domain=-4:0] plot(1,\x+2);
 \draw[thick,domain=0:1] plot(\x+2,\x+4);
 \draw[thick,domain=-4:0] plot(\x+2,4);
 \draw[thick,domain=-6:0] plot(2,\x+4);
 \draw[thick,domain=0:3.5] plot(\x-0.5,\x-1);
 \draw[thick,domain=-1.5:0] plot(\x-0.5,-1);
 \draw[thick,domain=-1:0] plot(-0.5,\x-1);
 \coordinate (A) at (0,-2);
 \draw (A) node[below]{$i$};
 \coordinate (B) at (1,-2);
 \draw (B) node[below]{$j$};
 \coordinate(C) at (1.4,-2);
 \draw (C) node[below]{$i'$};

\end{tikzpicture}
\caption{Six colored regions in the case where $j$ is higher than $i$}\label{Trop1}
\end{figure}
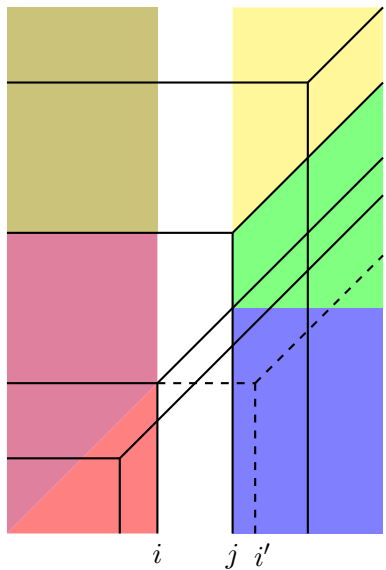
Next, assume the $y$-coordinate of the central point of $i$ is larger than that of $j$. 

 \begin{itemize}
\item The red region is the area bounded by the anti-diagonal ray and $y$-ray extending from the intersection of the $y$-ray of $i$ and the $x$-ray of $j$.
\item The purple region is bounded by the anti-diagonal ray and anti-$y$-ray extending from the intersection of the $y$-ray of $i$ and the $x$-ray of $j$, and $x$-ray of $i$.
\item The olive region is bounded by the anti-$y$ ray and $x$-ray of $i$.
\item The blue region is bounded by the anti-$x$-ray and $y$-ray of $j$.
\item The green region is bounded by the anti-$x$-ray, anti-$y$-ray of $j$, and the diagonal ray of $i$. 
\item The yellow region is bounded by the anti-$y$-ray of $j$ and diagonal ray of $i$.
\end{itemize}
\begin{figure}[h]
\begin{tikzpicture}
 \fill[red!50](-2,-2)--(0,0)--(0,-2)--(-2,-2);
 \fill[blue!50](1,-2)--(1,0)--(3,0)--(3,-2);
 \fill[green!50](3,0)--(1,0)--(1,3)--(3,5);
 \fill[yellow!50](1,3)--(1,5)--(3,5);
 \fill[purple!50](-2,-2)--(0,0)--(0,2)--(-2,2);
 \fill[olive!50](-2,2)--(0,2)--(0,5)--(-2,5);
 \draw[thick,domain=0:3] plot(\x,\x+2);
 \draw[thick,domain=-2:0] plot(\x,2);
 \draw[thick,domain=-4:0] plot(0,\x+2);
 \draw[thick,dashed,domain=0:1.3] plot(\x,2);
 \draw[thick,dashed,domain=-4:0] plot(1.3,\x+2);
 \draw[thick,dashed,domain=0:1.7] plot(1.3+\x,\x+2);
 \draw[thick,domain=0:2] plot(\x+1,\x);
 \draw[thick,domain=-3:0] plot(\x+1,0);
 \draw[thick,domain=-2:0] plot(1,\x);
 \draw[thick,domain=0:0.5] plot(\x+2,\x+4.5);
 \draw[thick,domain=-4:0] plot(\x+2,4.5);
 \draw[thick,domain=-6.5:0] plot(2,\x+4.5);
 \draw[thick,domain=0:3.5] plot(\x-0.5,\x-1);
 \draw[thick,domain=-1.5:0] plot(\x-0.5,-1);
 \draw[thick,domain=-1:0] plot(-0.5,\x-1);
 \coordinate (A) at (0,-2);
 \draw (A) node[below]{$i$};
 \coordinate (B) at (1,-2);
 \draw (B) node[below]{$j$};
 \coordinate(C) at (1.4,-2);
 \draw (C) node[below]{$i'$};

\end{tikzpicture}
\caption{Six colored regions in the case where $i$ is higher than $j$}\label{Trop1'}
\end{figure}
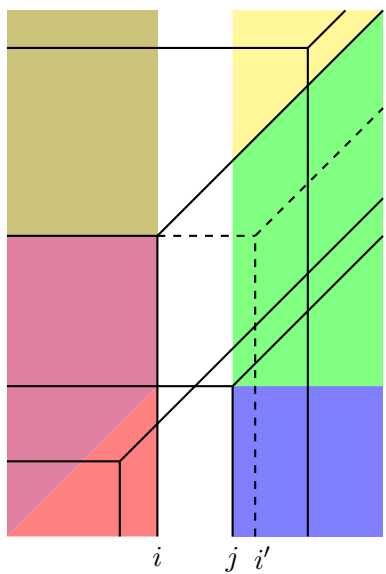
We define the condition $(\boldsymbol{\ast})$ as follows.
\begin{align*}
(\boldsymbol{\ast})
\begin{cases}
\text{The red region is not empty.}\cdots(a)\\
\text{The blue and olive regions are empty.}\cdots(b)\\
\text{Either the yellow or green region is non-empty.}\cdots(c)\\
\text{There are at least two elements in the red and purple regions.}\cdots(d)\\
\end{cases}
\end{align*}


Note that if the condition (a) in ($\boldsymbol{\ast}$) is false, then nothing happens after we exchange $i$ and $j$, so it is natural to assume (a).

Our main result uses the following $w$ and $F$.
\medskip
\begin{defi}\label{wFdef}
Let $w_{(i,j)}$ be the $3\times n$ matrix such that all the columns are zero vectors except for the columns corresponding to $i$ and $j$, the column corresponding to $i$ is $(1,-1,0)^T$, and the column corresponding to $j$ is $(-1,1,0)^T$. Namely,
\begin{equation*}
w_{(i,j)}:=
\begin{blockarray}{ccccccccccc}
\ &\ &\ & i &\ &\ &\ & j&\ &\  \  \\
\begin{block}{(ccccccccccc)}
0& \cdots& 0& 1& 0& \cdots& 0& -1& 0& \cdots 0\\
0& \cdots& 0& -1& 0& \cdots& 0& 1& 0& \cdots 0\\  
0& \cdots& 0& 0& 0& \cdots& 0& 0& 0& \cdots 0\\
\end{block}
\end{blockarray}
\in \mathbb{R}^{3\times n}.
\end{equation*}
The lattice polytope $F_{(i,j)}$ in $\mathbb{R}^{3\times n}$ is defined by the convex hull of $0$ and $f$, where $0$ denotes the zero matrix and $f$ is the matrix such that 
the columns corresponding to green or yellow regions are $(1,-1,0)^T$, and the columns corresponding to $i$ or $j$  are $(0,-1,0)^T$
 and the remaining columns are all zero vectors. Namely,
\begin{align*}
F_{(i,j)}&:=\mathrm{Conv}\left\{0,\ f
\right\} \subset \mathbb{R}^{3\times n}\\
\text{where} \ f&:=
\begin{blockarray}{ccccccccccc}
\ &\textcircled{1} \ &\ &i &j &\ &\textcircled{2} &\ &\ &\textcircled{3} \\
\begin{block}{(ccc|cc|ccc|ccc)}
0& \cdots& 0& 0& 0& 1& \cdots& 1& 0& \cdots& 0\\
0& \cdots& 0& -1& -1& -1& \cdots& -1& 0& \cdots& 0\\  
0& \cdots& 0& 0& 0& 0& \cdots& 0& 0& \cdots& 0\\
\end{block}
\end{blockarray}
\in \mathbb{R}^{3\times n}.
\end{align*}
Remark that $F_{(i,j)} \subset w_{(i,j)}^{\perp}$. 
We associate the columns corresponding to the tropical line that belongs to the red region with \textcircled{1}. Similarly, we associate the columns corresponding to the green or yellow region with \textcircled{2}, the purple region with \textcircled{3}.
\end{defi}
\begin{thm}\label{TCMmain}
Assume that there are two matching fields $\Lambda ,\ \Lambda '$, both of which have the same tropical hyperplane arrangements except for swapping two adjacent tropical lines, $i$ and $j$, and that condition $(\boldsymbol{\ast})$ is satisfied. Then, the pair of matching field polytopes $P_\Lambda ,\ P_{\Lambda '}$ can be obtained from one another by a combinatorial mutation.
\end{thm}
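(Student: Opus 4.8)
\emph{Proof plan.} The plan is to use the specific $w=w_{(i,j)}$ and $F=F_{(i,j)}=\conv\{0,f\}$ of Definition \ref{wFdef} and to show that the tropical map $\varphi=\varphi_{w,F}$ carries $P_\Lambda$ onto $P_{\Lambda'}$, after which Lemma \ref{lem:no_edge_in_image} certifies that $\varphi$ is a genuine combinatorial mutation. Since $F$ is the segment with endpoints $0$ and $f$, for every $u\in M_\RR$ we have $u_{\min}=\min\{0,\langle f,u\rangle\}$; hence $\varphi$ fixes each point with $\langle f,\cdot\rangle\ge 0$ and sends $u\mapsto u-\langle f,u\rangle\,w$ when $\langle f,u\rangle<0$. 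The whole argument thus reduces to controlling the single functional $\langle f,\cdot\rangle$ on the vertices $v_{I,\Lambda}$.

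First I would evaluate $\langle f,v_{I,\Lambda}\rangle$ directly. Writing $c_1,c_2,c_3$ for the elements of $I$ that $\Lambda$ places in rows $1,2,3$, the column pattern of $f$ shows that $\langle f,v_{I,\Lambda}\rangle\in\{-1,0,1\}$: it equals $1$ exactly when $c_1$ lies in a green or yellow column while $c_2\notin\{i,j\}$ and $c_2$ is not green or yellow, it equals $-1$ exactly when $c_1$ is not green or yellow while $c_2\in\{i,j\}$ or $c_2$ is green or yellow, and it is $0$ otherwise. In particular $P_\Lambda\subseteq\{x:-1\le\langle f,x\rangle\le 1\}$, which is the standing hypothesis of Lemma \ref{lem:no_edge_in_image}, and the two outer layers $\langle f,\cdot\rangle=\pm 1$ are described explicitly.

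Next I would match $\varphi$ with the swap $\Lambda\rightsquigarrow\Lambda'$ vertex by vertex, reading orderings off covectors via Proposition \ref{prop:initialPlucker}. The point is that $w$ moves $i$ from row $2$ to row $1$ and $j$ from row $1$ to row $2$, so $v_{I,\Lambda}+w$ is again a matching field vertex precisely when $I\supseteq\{i,j\}$ with $j$ in row $1$ and $i$ in row $2$. I would check that, once the dangerous triples are excluded, these are exactly the vertices on the layer $\langle f,\cdot\rangle=-1$, and that swapping the adjacent lines $i,j$ turns each such ordering into the one with $i$ in row $1$ and $j$ in row $2$, so that $\varphi(v_{I,\Lambda})=v_{I,\Lambda}+w=v_{I,\Lambda'}$. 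For the remaining triples, those with $\langle f,\cdot\rangle\ge 0$, the swap leaves the ordering unchanged and $\varphi$ fixes $v_{I,\Lambda}=v_{I,\Lambda'}$. The emptiness of the blue and olive regions, clause $(b)$ of $(\boldsymbol{\ast})$, is precisely what excludes the dangerous triples that would carry the value $-1$ without containing $\{i,j\}$ in the right rows (for those $v+w$ would fail to be a vertex); the region decompositions of Figures \ref{Trop1} and \ref{Trop1'} organize this case analysis.

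The hard part will be the last step: verifying the pairing criterion of Lemma \ref{lem:no_edge_in_image}, which yields convexity and hence that no new vertices appear. Given vertices $u,v\in V(P_\Lambda)$ with $\langle f,u\rangle=-1$ and $\langle f,v\rangle=1$, I must produce $t,t'\in P_\Lambda$ on the middle hyperplane $\langle f,\cdot\rangle=0$ with $t+t'=u+v$. I would construct them by pairing the row-$1$ entry of $v$ with the row-$2$ entry of $u$, and the row-$1$ entry of $u$ with the row-$2$ entry of $v$, distributing the two row-$3$ entries between them, and then checking region by region that both recombined tuples are honest vertices, i.e.\ valid covector orderings of $P_\Lambda$. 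This is exactly where the remaining clauses of $(\boldsymbol{\ast})$ are used: $(a)$ and $(c)$ guarantee that the red and the green/yellow regions actually supply the elements realizing $t$ and $t'$, while $(d)$ furnishes the two elements in the red and purple regions needed to keep both recombined tuples inside the polytope. Once the pairing condition holds, Lemma \ref{lem:no_edge_in_image} forces $\varphi(P_\Lambda)$ to be convex; therefore $\varphi(P_\Lambda)=\conv\{v_{I,\Lambda'}\}=P_{\Lambda'}$, and $P_{\Lambda'}$ is a combinatorial mutation of $P_\Lambda$, completing the proof.
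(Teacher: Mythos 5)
Your plan is correct and follows essentially the same route as the paper's own proof: the same mutation data $w_{(i,j)}$, $F_{(i,j)}$, the same observation that $\varphi_{w,F}$ adds $w$ exactly on the layer $\langle f,\cdot\rangle=-1$ (the triples $\{i,j,k\}$ with $k$ red) and hence realizes the swap of $i$ and $j$, and the same verification of the pairing criterion of Lemma \ref{lem:no_edge_in_image} by exchanging the row-$1$/row-$2$ entries of $u$ and $v$ and distributing the row-$3$ entries, with clauses (b), (c), (d) of $(\boldsymbol{\ast})$ playing exactly the roles you assign them. The paper simply carries out your ``region by region'' check explicitly as three tableau cases.
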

\begin{proof}
We use $w_{(i,j)}$ and $F_{(i,j)}$ as in Definition \ref{wFdef}, and let $w$ and $F$ be $w_{(i,j)}$ and $F_{(i,j)}$, respectively. Note that $u_{\mathrm{min}} = -1$ is only true for vertices corresponding to $\{i,j,k\}$ with $k \in \textcircled{1}$. Clearly, the difference between the two matching fields is only about $\{i,j,k\}$ ($k \in \textcircled{1}$). Therefore, take $\varphi_{w,F}$ as a candidate for the tropical map that induces a combinatorial mutation. 
Note that $\varphi_{w,F}$ sends the vertices of $P_\Lambda$ to the vertices of $P_\Lambda '$.

By these discussions, it is sufficient to prove that $\varphi_{w,F}(P_\Lambda)$ does not have new vertices. To show this
, we use Lemma \ref{lem:no_edge_in_image}. We only need to focus on the vertices of the matching field polytope $P_\Lambda$ which  
has a non-zero inner product $\lambda_{i,j}$ with $w$.
$\lambda_{i,j}$ is 1 only in the following cases because of the entries of $f$.

$$\begin{cases}
\begin{pmatrix}
k_1\\
k_2\\
k_3
\end{pmatrix}
(k_1 \in \textcircled{2}, k_2,k_3 \in \textcircled{1},\textcircled{3})\ (\text{\textbf{Case \ref{k123}}})\\
\begin{pmatrix}
k_1\\
k_2\\
i
\end{pmatrix}
(k_1 \in \textcircled{2},k_2 \in \textcircled{3})\ (\text{\textbf{Case \ref{k12i}}})\\
\begin{pmatrix}
k_1\\
k_2\\
j
\end{pmatrix}
(k_1 \in \textcircled{2},k_2 \in \textcircled{3})\ (\text{\textbf{Case \ref{k12j}}})
\end{cases}$$
The conditions (c) and (d) in ($\boldsymbol{\ast}$) assure the existence of those that satisfy with $\lambda_{i,j} = 1$. Thanks to the condition (b) in ($\boldsymbol{\ast}$), we only check three patterns. We now proceed by taking the cases on the pattern. 
\begin{case}\label{k123}
In this case, we swap the entries in the first row. The number on the top of the vector $v$ is the inner product $\langle f,v\rangle$.

\begin{tabular}{|l|r|} 
\multicolumn{2}{c}{$1$\ \ \ $-1$ } \\ \hline 
$k_1$& $j$\\ 
$k_2$& $i$\\
$k_3$& $k$\\ \hline
\end{tabular}
=
\begin{tabular}{|l|r|} 
\multicolumn{2}{c}{$0$\ \ \ $0$ } \\ \hline 
$j$& $k_1$\\ 
$k_2$& $i$\\
$k_3$& $k$\\ \hline
\end{tabular}
\end{case}
\begin{case}\label{k12i}

If the $y$-coordinate of the central point of $i$ is larger than that of $j$
, then $k_2$ belongs to the olive region. Therefore nothing is to prove (see the condition (b) in ($\boldsymbol{\ast}$)). Consider the case where 
the $y$-coordinate of the central point of $j$ is larger than that of $i$.
In this case, we swap the entries in the first row.

\begin{tabular}{|l|r|} 
\multicolumn{2}{c}{$1$\ \ \ $-1$ } \\ \hline 
$k_1$& $j$\\ 
$k_2$& $i$\\
$i$& $k$\\ \hline
\end{tabular}
=
\begin{tabular}{|l|r|} 
\multicolumn{2}{c}{$0$\ \ \ $0$ } \\ \hline 
$j$& $k_1$\\ 
$k_2$& $i$\\
$i$& $k$\\ \hline
\end{tabular}
\end{case}
\begin{case}\label{k12j}
If the $y$-coordinate of the central point of $i$ is larger than that of $j$
, then $k_2$ belongs to the olive region. Therefore nothing is to prove (see the condition (b) in ($\boldsymbol{\ast}$)). Consider the case where 
the $y$-coordinate of the central point of $j$ is larger than that of $i$.
In this case, we swap the entries in the second row.

\begin{tabular}{|l|r|} 
\multicolumn{2}{c}{$1$\ \ \ $-1$ } \\ \hline 
$k_1$& $j$\\ 
$k_2$& $i$\\
$j$& $k$\\ \hline
\end{tabular}
=
\begin{tabular}{|l|r|} 
\multicolumn{2}{c}{$0$\ \ \ $0$ } \\ \hline 
$k_1$& $j$\\ 
$i$& $k_2$\\
$j$& $k$\\ \hline
\end{tabular}
\end{case}
This completes the proof.
\end{proof}
\medskip
\begin{rem}
Even if we drop (c) and (d) in ($\boldsymbol{\ast}$), $\varphi_{w,F}$ becomes a unimodular transformation, so we can discuss a sequence of combinatorial mutations.
\end{rem}
\begin{exa}
We check that there is a sequence of combinatorial mutations from the block diagonal matching field $\mathcal{B}_2$ to the diagonal matching field. We want to move the red line to the dashed line of Figure \ref{BDMFtoDMF}.
See \cite[Figure 2]{mohammadi2019toric} about tropical hyperplane arrangements of block diagonal matching fields.
First, remark the condition (a) in ($\boldsymbol{\ast}$) is false on the left figure, then nothing happens after we exchange $i$ and $j$ in each step.
Therefore, nothing is to be checked.

Next, see the second figure. Check the condition $(\boldsymbol{\ast})$ for the red line and the left adjacent line. Then, the two lines are swapped. Repeating these operations for $4$ times, we get the right figure.  For exapmle, the second case of the sequence of the combinatorial mutation, $w_{(i,,j)}$ and $F_{(i,,j)}$ are the following:

\begin{equation*}
w_{(i,j)}:=
\begin{blockarray}{cccccc}
\ &\ &\ &i &j &\ \\
\begin{block}{(cccccc)}
0& 0& 0& 1& -1& 0\\
0& 0& 0& -1& 1& 0\\  
0& 0& 0& 0& 0& 0\\
\end{block}
\end{blockarray}
\quad \text{  and } \quad
F_{(i,j)}:=\mathrm{Conv}\left\{0,\ 
\begin{blockarray}{cccccc}
\textcircled{1} &\ &\textcircled{3} &i &j &\textcircled{2} \\
\begin{block}{(c|cc|cc|c)}
0& 0& 0& 0& 0& 1\\
0& 0& 0& -1& -1& -1\\  
0& 0& 0& 0& 0& 0\\
\end{block}
\end{blockarray}
\right\}
.
\end{equation*}
The number in the column of the matices corresponds to the number counting from the right in the second figure.
\end{exa}
\begin{rem}\label{BDMFCor}
By repeating the above operation,  any block diagonal matching field polytopes can be obtained from the diagonal matching field polytope by a sequence of combinatorial mutations. This is one of the main results of \cite{clarke2021combinatorial}.
\end{rem}
\medskip
\begin{figure}
\centering
\begin{minipage}[h]{0.3\linewidth}
\begin{tikzpicture}[xscale=0.4,yscale=0.1]
\wTrop{3}{0}
\wTrop{1}{12.2}
\wTrop{0.5}{6.1}
\wTrop{1.5}{18.3}
\wTrop[red]{2.5}{-6.1}
\wTrop{2}{24.4}
\draw[thick,dashed,domain=-13.9:0] plot(-0.5,-6.1+\x);
\draw[thick,dashed,domain=7.5:0] plot(-0.5+\x,-6.1+\x);
\draw[->,very thick] (9,5)--(11.5,5);
\end{tikzpicture}
\end{minipage}
\begin{minipage}[h]{0.03\linewidth}
\centering
\begin{tikzpicture}[scale=0.6]
\end{tikzpicture}
\end{minipage}
\begin{minipage}[h]{0.3\linewidth}
\begin{tikzpicture}[xscale=0.4,yscale=0.1]
\wTrop[red]{3}{0}
\wTrop{1}{12.2}
\wTrop{0.5}{6.1}
\wTrop{1.5}{18.3}
\wTrop{-0.5}{-6.1}
\wTrop{2}{24.4}
\draw[thick,dashed,domain=-20:0] plot(0,\x);
\draw[thick,dashed,domain=7:0] plot(\x,\x);
\draw[->,very thick] (9,5)--(11.5,5);
\end{tikzpicture}
\end{minipage}
\begin{minipage}[h]{0.03\linewidth}
\begin{tikzpicture}[scale=0.6]
\end{tikzpicture}
\end{minipage}
\begin{minipage}[h]{0.3\linewidth}
\begin{tikzpicture}[xscale=0.4,yscale=0.1]
\wTrop{0}{0}
\wTrop{1}{12.2}
\wTrop{0.5}{6.1}
\wTrop{1.5}{18.3}
\wTrop{-0.5}{-6.1}
\wTrop{2}{24.4}
\end{tikzpicture}
\end{minipage}
\caption{The example of combinatorial mutations from the block diagonal matching field $\mathcal{B}_2$ to the diagonal matching field. (The $y$-axis is scaled down by $\frac{1}{4}$.)}\label{BDMFtoDMF}
\end{figure}
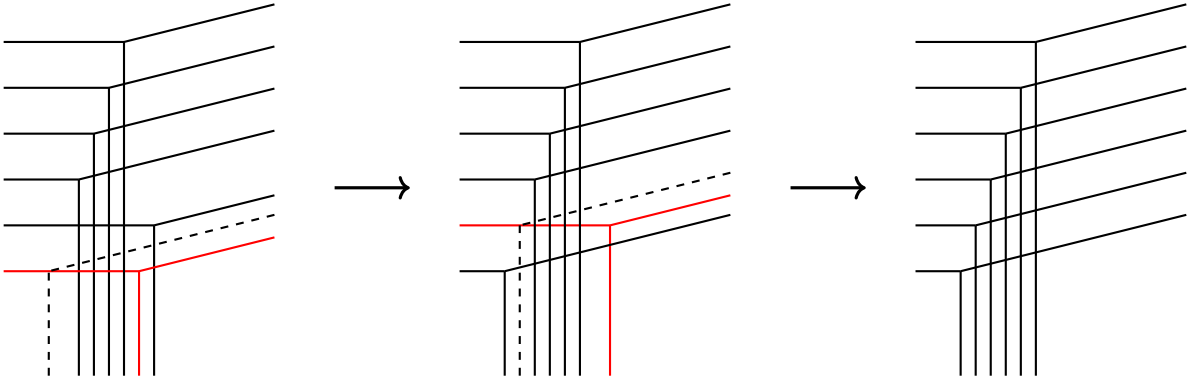

\bibliography{Biblio}
\bibliographystyle{abbrv}
\end{document}